\newtheorem{theorem}{Theorem}[section]
\newtheorem{lemma}[theorem]{Lemma}
\newtheorem{corollary}[theorem]{Corollary}
\theoremstyle{definition}
\theoremstyle{remark}
\newcommand{\N}{\ensuremath{\mathbb N}}
\newcommand{\Sym}{\ensuremath{\mathcal S}}
\newcommand{\Prob}{\ensuremath{\mathbb P}}
\newcommand{\Gcal}{\ensuremath{\mathcal G}}
\newcommand{\Hcal}{\ensuremath{\mathcal H}}
\newcommand{\Bcal}{\ensuremath{\mathcal B}}
\title{A note on classes of subgraphs of locally finite graphs}
\author{Florian Lehner}
\begin{document}

\maketitle
\begin{abstract}
We investigate the question how `small' a graph can be, if it contains all members of a given class of locally finite graphs as subgraphs or induced subgraphs.
More precisely, we give necessary and sufficient conditions for the existence of a connected, locally finite graph $H$ containing all elements of a graph class $\Gcal$. 
These conditions imply that such a graph $H$ exists for the class $\Gcal_d$ consisting of all graphs with maximum degree $<d$ which raises the question whether in this case $H$ can be chosen to have bounded maximum degree. We show that this is not the case, thereby answering a question recently posed by Huynh et al.
\end{abstract}

\section{Introduction}

Given a graph class $\Gcal$, we call a graph $G \in \Gcal$ \emph{(strongly) universal} for $\Gcal$, if it contains every graph in $\Gcal$ as an (induced) subgraph. This concept was probably first studied by Rado who showed in \cite{zbMATH03225943} that the set of all countable graphs contains a strongly universal element. 

Numerous other results about the existence or non-existence of universal elements in various graph classes can be found in the literature. Graph classes defined by excluded subgraphs are particularly well studied, especially when the excluded subgraphs are trees or cycles, see for instance \cite{zbMATH05143571,zbMATH06806086,zbMATH00870145,zbMATH01011260,zbMATH01321812,zbMATH04099373,zbMATH03877235}.

In this short note we are interested in the question whether there is a locally finite (that is, all vertices have finite degree) graph containing all members of a given graph class as (induced) subgraphs. Let us say that a graph $H$ contains a graph class $\Gcal$, if $H$ contains every $G \in \Gcal$ as a subgraph, and that $H$ strongly contains $\Gcal$, if $H$ contains every $G \in \Gcal$ as an induced subgraph. 

Our first main result states that the existence of a locally finite graph $H$ which (strongly) contains $\Gcal$ can be determined by looking at the set of balls of finite radius appearing in members of $\Gcal$. 

To state the theorem, we need to set up some notation.
Given a graph class $\Gcal$, let 
\[
\Bcal = \{B_G(v,r) \mid G \in \Gcal, v \in V(G), r \in \N_0\}
\] where $B_G(v,r)$ and $B_{G'}(v',r)$ are considered the same if there is an isomorphism between them which maps $v$ to $v'$. Define a tree structure $T(\Gcal)$ on $\Bcal$ by connecting $B_G(v,r-1)$ and $B_G(v,r)$ by an edge for every $G \in \Gcal$, $v \in V(G)$, and $r \in \N$. 

Let us call $\Gcal$ closed, if for every $G \notin \Gcal$ there is some $v \in V(G)$ and $r \in \mathbb N$ such that $B_G(v,r) \notin \Bcal$. 
We note that many interesting graph classes are closed. For instance it is easy to see that any graph class defined by forbidden (induced) subgraphs of finite diameter is closed, and the same is true for graph classes defined by finite forbidden minors or topological minors.

With the above notation, and denoting by $T_\infty$ the tree in which every vertex has countably infinite degree, we have the following characterisation of closed graph classes which admit a locally finite graph containing them.


\begin{theorem}
\label{thm:universal}
Let $\Gcal$ be a closed class of connected, locally finite graphs. The following three statements are equivalent.
\begin{enumerate}
    \item There is a connected, locally finite graph which strongly contains $\Gcal$.
    \item There is a connected, locally finite graph which contains $\Gcal$.
    \item $T(\Gcal)$ does not contain a subdivision of $T_\infty$.
\end{enumerate}
\end{theorem}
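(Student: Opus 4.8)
Since an induced subgraph is in particular a subgraph, (1) $\Rightarrow$ (2) is immediate, so it remains to prove (2) $\Rightarrow$ (3) and (3) $\Rightarrow$ (1). The backbone of both is a tree-theoretic fact I would establish first: for a countable tree $T$ the following are equivalent — $T$ contains no subdivision of $T_\infty$; the space $\Omega(T)$ of ends of $T$ is $\sigma$-compact; $T$ carries an ordinal-valued ``pruning rank'' $h$ for which every vertex has only finitely many children $c$ such that some vertex in the subtree below $c$ has rank $\ge$ that of the vertex. The rank comes from iterating the operation that removes, from the vertices of infinite degree, those with only finitely many components meeting the current survivor set; that $\Omega(T)$ is not $\sigma$-compact once $T$ contains a subdivision of $T_\infty$ is clear, since the ends of the subdivision form a closed copy of Baire space $\mathbb N^{\mathbb N}$, and the converse uses Hurewicz's dichotomy (a Polish space is $\sigma$-compact or contains a closed copy of $\mathbb N^{\mathbb N}$) plus a little work to extract an honest subdivision. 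A second preliminary makes $T(\Gcal)$ a faithful encoding of $\Gcal$: \emph{because $\Gcal$ is closed, the limit graph of any ray of $T(\Gcal)$ — the increasing union of the balls along it — lies in $\Gcal$}, since local finiteness forces each of its balls to agree, eventually, with a ball of one of the graphs along the ray, and so to lie in $\Bcal$.

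For (2) $\Rightarrow$ (3) I would prove the contrapositive. Given a connected, locally finite $H$ containing $\Gcal$, let $\mathcal C(H)$ be the rooted forest whose level-$r$ vertices are the subgraph embeddings into $H$ of radius-$r$ balls from $\Bcal$ that arise by restricting an embedding of a larger ball, with parent the restriction to radius $r-1$. The key observation is that \emph{$\mathcal C(H)$ is locally finite}: the vertices that a child $B'$ of a ball $B$ adds lie within distance one of the boundary of $B$, so an embedding of $B'$ extending a fixed embedding $\psi$ of $B$ has image inside the fixed finite set $\psi(B)\cup N_H[\psi(\partial B)]$, of which there are only finitely many. Hence $\Omega(\mathcal C(H))$ is $\sigma$-compact, being a countable (over $V(H)$) disjoint union of compact end spaces of locally finite trees. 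The projection $\pi\colon\mathcal C(H)\to T(\Gcal)$ is level- and parent-preserving, hence continuous on ends, and surjective on ends: by the preliminary every ray of $T(\Gcal)$ has a limit graph in $\Gcal$, this embeds in $H$, and the embedding restricts to a coherent tower projecting onto the ray. So $\Omega(T(\Gcal))$ is a continuous image of a $\sigma$-compact space, hence $\sigma$-compact, and the lemma gives that $T(\Gcal)$ has no subdivision of $T_\infty$.

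For (3) $\Rightarrow$ (1) the lemma makes $\Omega(T(\Gcal))$ $\sigma$-compact, say $\Omega(T(\Gcal))=\bigcup_n K_n$ with $K_n$ compact; the subtree $T_n^\ast$ of $T(\Gcal)$ formed by the vertices lying on rays of $K_n$ has end space $K_n$ with every vertex on a ray, hence is locally finite. I would then assemble $H$ as a connected amalgam, glued along a ray, of locally finite gadgets $H_n$, where $H_n$ is built from $T_n^\ast$ by attaching, along its (locally finite) edges, the finite balls sitting at its vertices, in such a way that each ray-limit realised in $K_n$ — hence, by the preliminary and a parallel treatment of the finite members, every member of $\Gcal$ — is an induced subgraph of $H_n$; local finiteness of $T_n^\ast$ keeps every $H_n$, and so $H$, locally finite. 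Making the attachments realise induced subgraphs — i.e.\ getting all the non-edges among and toward the attached parts right — is the delicate bookkeeping, and converting the abstract $\sigma$-compact exhaustion into gadgets with the correct overlapping structure is where I expect the real work of the proof to concentrate; the pruning rank of the lemma is the natural device for organising this, as it controls level by level how much branching the gadgets must absorb.
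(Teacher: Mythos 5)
Your outline of the equivalences is right, and two of the three directions essentially go through. The implication (1) $\Rightarrow$ (2) is trivial, and your argument for (2) $\Rightarrow$ (3) is sound: the forest $\mathcal{C}(H)$ of ball-embeddings into $H$ is locally finite for the reason you give, the projection to $T(\Gcal)$ is continuous and (using closedness of $\Gcal$) surjective on ends, and a continuous image of a $\sigma$-compact space is $\sigma$-compact. This is the same underlying idea as the paper's proof (which indexes the compact pieces by the vertices of $H$, taking $\Hcal_i$ to be the rooted subgraphs of $H$ with root $v_i$), just packaged topologically. Your tree lemma is also correct, though you invoke Hurewicz's dichotomy where the paper gets by with an elementary transfinite induction: repeatedly pick a vertex $B$ whose descendant subtree is locally finite, strip off the compact class of graphs whose rays pass through $B$, and observe that if no such $B$ exists one can grow a subdivision of $T_\infty$; countability of $\Bcal$ makes the recursion terminate. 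Hurewicz buys you nothing here and costs you descriptive set theory.

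The genuine gap is in (3) $\Rightarrow$ (1), which is the main content of the theorem, and you have explicitly deferred it: ``making the attachments realise induced subgraphs \dots is the delicate bookkeeping \dots where I expect the real work of the proof to concentrate.'' Reducing to a locally finite subtree $T_n^\ast$ is correct but is the easy half; what is missing is the construction of a locally finite graph $H_n$ into which every graph of the corresponding compact class \emph{strongly} embeds, and ``attaching the finite balls along the edges of $T_n^\ast$'' does not specify how overlapping balls are to be identified, which is exactly where the difficulty lies. The paper's device is: for each ball $B \in \Bcal_n$ of radius $r$ fix one strong embedding $\iota_B \colon B^\circ \to B$ of its interior (the concentric ball of radius $r-1$) into $B$ fixing the centre, take the disjoint union of all $B \in \Bcal_n$, and identify each $v \in V(B^\circ)$ with $\iota_B(v)$. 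This makes $B_{H_n}(v_n,r)$ the union of the finitely many radius-$r$ balls (local finiteness of $T_n^\ast$ enters exactly here), gives a strong embedding of every $B_G(v,r)$ into $H_n$, and a strong embedding of an infinite $(G,v)$ is then extracted by a compactness/diagonal argument over the finitely many embeddings of each $B_G(v,r)$. No pruning rank is needed, and the induced (non-edge) condition is not bookkeeping to be arranged afterwards but is built into the choice of the maps $\iota_B$ as strong embeddings. Without this (or an equivalent) construction, the implication (3) $\Rightarrow$ (1) is not proved.
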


Closedness of graph classes has a topological interpretation which adds another interesting facet to the above theorem: the three equivalent conditions are met if and only if the set
$
\Gcal^\bullet= \{(G,v) \mid G \in \Gcal, v \in V(G)\},
$
is $\sigma$-compact with respect to a natural topology on the set of rooted graphs.

We also note that a similar result holds, if we drop the requirement that $\Gcal$ is closed (see Theorem \ref{thm:universal-notclosed}), but the third statement has to be replaced by a more technical condition.

The second main result of this paper concerns graphs not containing some fixed star.
Denote by $\Gcal_d$ the class of all connected graphs which do not contain a star with $d$ leaves as a subgraph. Equivalently, $\Gcal_d$ is the class of all graphs in which all vertex degrees are strictly less than $d$. We also allow $d = \infty$ and let $\Gcal_\infty$ denote the class of connected, locally finite graphs.

The class $\Gcal_\infty$ does not contain a universal element by an argument attributed to de Bruijn in \cite{zbMATH03225943} (or by Theorem \ref{thm:universal}), and the same is true for $\Gcal_d, d \geq 4$ as shown in \cite{zbMATH03877235}. However, it follows from Theorem \ref{thm:universal} that there is a graph $H \in \Gcal_\infty$ (strongly) containing $\Gcal_d$ for $d < \infty$. We show that this is best possible, thereby answering a question recently posed by Huynh et al. in \cite[Section 7]{huynh2021universality}.

\begin{theorem}
\label{thm:unbounded}
If a countable graph $H$ contains $\Gcal_4$, then it necessarily has unbounded vertex degrees.
\end{theorem}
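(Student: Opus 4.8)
The plan is a diagonalisation against the vertices of $H$. Assume for contradiction that $H$ is countable, that $\Delta(H) \le D < \infty$ (necessarily $D \ge 3$, since $K_{1,3} \in \Gcal_4$), and that $H$ contains $\Gcal_4$. Enumerate $V(H) = \{h_1, h_2, \dots\}$. I would build a connected subcubic graph $G$ with a distinguished vertex $\rho$, as an increasing union $G = \bigcup_n B_n$ of finite subcubic graphs with $B_n = B_G(\rho, r_n)$ for a strictly increasing sequence $r_0 < r_1 < \dots$, so that stage $n$ ensures: no embedding of $G$ into $H$ sends $\rho$ to $h_n$. Since any embedding of $G$ into $H$ must send $\rho$ somewhere, it then follows that $G$ is not a subgraph of $H$, contradicting that $H$ contains $\Gcal_4$.

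The point that makes a single vertex $\rho$ enough is the bounded degree: each ball $B_H(h_n, r)$ is finite, so the rooted partial embeddings of $B_G(\rho, r)$ into $B_H(h_n, r)$ form, as $r$ increases, a finitely branching tree, and by König's Lemma $(G, \rho)$ embeds into $(H, h_n)$ if and only if $B_G(\rho, r)$ embeds into $B_H(h_n, r)$ as rooted graphs for every $r$. Hence at stage $n$ it suffices to extend the current finite subcubic ball $B_{n-1}$ to a finite subcubic ball $B_n$ — with all new vertices at distance $> r_{n-1}$ from $\rho$, so that $B_G(\rho, r_{n-1})$ stays equal to $B_{n-1}$ and earlier stages are undisturbed — in such a way that $B_n$ does not embed into $B_H(h_n, r_n)$ fixing $\rho \mapsto h_n$ (if $B_{n-1}$ already fails to embed into the corresponding ball, nothing need be done). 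One also arranges along the way that each $B_n$ has plenty of boundary vertices of degree $\le 2$, which is harmless (attach a few pendant paths).

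The whole theorem therefore reduces to the following extension lemma, which I expect to be the crux and the main obstacle: \emph{if $(H', v)$ is a rooted graph of bounded degree and $(B, \rho)$ is a finite subcubic rooted graph embedding into $(H', v)$, then some finite subcubic extension $(B', \rho) \supseteq (B, \rho)$ does not embed into $(H', v)$} — equivalently, no bounded-degree rooted graph is locally universal for rooted connected subcubic graphs. This cannot be done by a volume count alone: where $H$ is locally thick — up to $\approx D^r$ vertices in $B_H(h_n, r)$ — there is ample room for any subcubic ball of radius $r$, no matter how it is extended, so the gadget must be structural. The line of attack I would pursue is to inspect the finitely many rooted embeddings $\psi_1, \dots, \psi_t$ of $B$ into $(H', v)$ and attach, at degree-$\le 2$ boundary vertices of $B$, a finite subcubic ``trap'' gadget that no $\psi_i$ can extend over: near a candidate attachment point $\psi_i(b)$ one is confronted only with the \emph{fixed finite} bounded-degree graph $B_{H'}(\psi_i(b), s)$, and — crucially using that $s$, hence the radius of $B'$, may be taken as large as we wish — one should be able to play a sufficiently elaborate finite subcubic configuration against that particular finite graph so that it cannot be accommodated in the position forced by $\psi_i$. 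Building such gadgets uniformly enough to defeat $\psi_1, \dots, \psi_t$ at once, while keeping everything subcubic and $B_{n-1}$ intact, is the delicate combinatorial heart of the argument.

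The scheme also has a topological phrasing matching the $\sigma$-compactness remark above: the infinite rooted connected subcubic graphs form a perfect Polish space, the sets $U_v = \{(G,\rho) : (G,\rho) \hookrightarrow (H,v)\}$ are closed by the same König argument, and $\bigcup_{v \in V(H)} U_v$ is the whole space; by Baire's theorem some $U_v$ then has non-empty interior, which is exactly what the extension lemma forbids.
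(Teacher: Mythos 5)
Your reduction is sound: the K\"onig/compactness argument correctly shows that $(G,\rho)$ embeds into $(H,h_n)$ as a rooted graph if and only if every ball $B_G(\rho,r)$ does, so a diagonalisation that defeats each $h_n$ at some finite radius (without disturbing earlier balls) would indeed produce a graph in $\Gcal_4$ that is not a subgraph of $H$. But the proof is not complete: everything hinges on the extension lemma you isolate --- every finite subcubic rooted ball that embeds into $(H,h_n)$ rooted has a finite subcubic extension that does not --- and you explicitly leave this unproven, offering only the plan of attaching unspecified ``trap'' gadgets against the finitely many embeddings $\psi_1,\dots,\psi_t$. That lemma is exactly the combinatorial content of the theorem (your own Baire rephrasing shows its negation is equivalent to some $U_v$ having non-empty interior), so as it stands the proposal reduces the theorem to an equally hard unproved statement.

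Moreover, your diagnosis of how to prove that lemma points in the wrong direction. You assert that ``this cannot be done by a volume count alone \dots\ the gadget must be structural,'' but counting is precisely what works --- not a count of vertices against $|B_H(h_n,r)|$, but a count of \emph{isomorphism types} against \emph{rooted embeddings}. The paper's gadgets are the graphs $G_s$: a two-way path with chords joining $2n$ to $-2s(n)$ for a permutation $s$. Their radius-$2^{i+1}$ balls realise on the order of $(2^i)!$ pairwise non-isomorphic subcubic graphs on $O(2^i)$ vertices, whereas any one of them that embeds with $0\mapsto v$ into a graph of maximum degree $\Delta$ is determined by an embedding of its spanning path plus one bounded choice per chord, giving at most $\Delta^{5\cdot 2^i}$ embeddable types; since $(2^i)!$ eventually exceeds $\Delta^{5\cdot 2^i}$, some type fails to embed at $v$. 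Localised at an attachment point of your $B_{n-1}$, this is exactly the extension lemma you need, and it simultaneously defeats all of $\psi_1,\dots,\psi_t$ because the bound is uniform over where the path lands. (The paper itself avoids diagonalisation altogether: it keeps the single family $\{G_s\}_{s\in\Sym}$, puts a product probability measure on $\Sym$, and uses positivity of some $\Prob(\Sym(v))$ in place of your stage-by-stage construction --- a global measure-theoretic pigeonhole where you propose a local argument. Either architecture works, but only once the counting of isomorphism types is supplied.)
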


Besides the result itself, the proof method may be of interest. Many results about non-existence of certain countable universal graphs are based on the fact that partitioning an uncountable set into countably many parts yields at least one uncountable part. The key idea in our proof can be seen as a quantitative version of this infinite pigeonhole principle: if the uncountable set is endowed with a non-null measure and the parts are measurable, then at least one of the parts must have positive measure.

\section{Preliminaries}

Throughout this short note, all graphs are assumed to be connected and simple unless explicitly stated otherwise. Moreover we assume all graphs to be countable in order to avoid set-theoretical subtleties which could arise due to the fact that the class of all graphs is not a set. As usual, denote by $V(G)$ and $E(G)$ the vertex and edge set of a graph $G$ respectively.

An \emph{embedding} of a graph $G$ into a graph $H$ is a map $\iota\colon V(G) \to V(H)$ which preserves adjacency, that is, $uv \in E(G) \implies \iota(u)\iota(v) \in E(H)$. A \emph{strong embedding} is an embedding $\iota$ which additionally preserves non-adjacency, that is, $uv \in E(G) \iff \iota(u)\iota(v) \in E(H)$. Note that $G$ is a subgraph of $H$ if and only if there is an embedding of $G$ into $H$, and $G$ is an induced subgraph of $H$ if and only if there is a strong embedding of $G$ into $H$.

A \emph{graph class} $\Gcal$ is a set of graphs such that  $G \in \Gcal$ whenever $G$ is isomorphic to some $G' \in \Gcal$. We say that a graph $H$ \emph{(strongly) contains} a graph class $\Gcal$, if every $G \in \Gcal$ has a (strong) embedding into $H$. A graph $G\in \Gcal$ which (strongly) contains $\Gcal$ is called \emph{(strongly) universal} for $\Gcal$.

A \emph{rooted graph} is a pair $(G,v)$ where $G$ is a graph and $v \in V(G)$. All of the above definitions carry over to rooted graphs, the only difference is that an embedding of a rooted graph $(G,v)$ into a rooted graph $(H,w)$ must always map the root $v$ of $G$ to the root $w$ of $H$. 
For a graph class $\Gcal$, we define the associated rooted graph class by 
\[
\Gcal^\bullet = \{(G,v) \mid G \in \Gcal, v \in V(G)\},
\]
in other words, $\Gcal^\bullet$ is the class of all rooted graphs whose underlying graph lies in $\Gcal$.

Any class of rooted graphs can be endowed with a metric by letting 
\[
d((G,v),(G',v')) = \exp(-\max \{r \mid B_G(v,r) \text{ is isomorphic to } B_{G'}(v',r)\}),
\]
where $B_G(v,r)$ as usual denotes the ball in $G$ with radius $r$ and centre $v$. 

A class $\Gcal$ of locally finite graphs is \emph{closed} if $\Gcal^\bullet$ is a closed subset of the class $\Gcal_\infty^\bullet$ of all locally finite, rooted graphs with respect to the topology induced by the metric given above. We remark that this definition of closedness is equivalent to the one given in the introduction.

We slightly abuse notation, and extend the definition of the tree $T(\Gcal)$ given in the introduction to rooted graph classes as follows. The vertex set of $T(\Gcal)$ is $\Bcal(\Gcal) := \{B_G(v,r) \mid (G,v) \in \Gcal, r \in \N_0\}$ where $B_G(v,r)$ and $B_{G'}(v',r)$ are considered the same if there is an isomorphism between them which maps $v$ to $v'$. Note that isomorphic balls with different radii are considered different, for instance given a finite rooted graph $(G,v)$ the balls $B_G(v,r)$ are distinct elements of $\Bcal(\Gcal)$ although they are isomorphic for all but finitely many $r$. Connect $B_G(v,r-1)$ and $B_G(v,r)$ by an edge for every $(G,v) \in \Gcal$ and $r \in \N$. For a class $\Gcal$ of (unrooted) graphs, the definition of $T(\Gcal)$ from the introduction coincides with the tree $T(\Gcal^{\bullet})$ corresponding to the associated rooted graph class.

We treat $T(\Gcal)$ as a rooted tree with root $B_0 = B_G(v,0)$; note that the definition of $B_0$ does not depend on the specific choice of $(G,v) \in \Gcal$ because $B_G(v,0)$ is the trivial one vertex graph for every choice of $(G,v)$. We call $B \in \Bcal(\Gcal)$ an \emph{ancestor} of $B' \in \Bcal(\Gcal)$ and $B'$ a \emph{descendant} of $B$, if $B$ lies on the $B_0$--$B'$-path in $T(\Gcal)$. Note that in this case there is $(G,v) \in \Gcal$ and $i\leq j$ such that $B$ is isomorphic to $B_G(v,i)$ and $B'$ is isomorphic to $B_G(v,j)$.

The tree $T(\Gcal)$ is closely linked to the topology of $\Gcal$ induced by the metric defined above. Note that any element $(G,v)$ of a rooted graph class $\Gcal$ corresponds to a one-way infinite path $(B_0, B_1, B_2, B_3, \dots)$ in $T(\Gcal)$ where each $B_i$ is isomorphic to $B_G(v,i)$; if we didn't treat isomorphic balls of different radii as different objects, paths corresponding to graphs of finite diameter would be finite. Mapping each graph to the corresponding path gives a homeomorphism between $\Gcal$ and a subset of the end space (see \cite{zbMATH05936872} for an introduction) of $T(\Gcal)$ and the results given in the remainder of this section can be seen as straightforward consequences of this homeomorphism. We provide proofs of these results for the convenience of the reader.

The first result concerns the closure of a graph class $\Gcal$. Note that from an infinite path $p = (B_0, B_1, B_2, B_3, \dots)$ starting at the root of $T(\Gcal)$  we can construct a rooted graph $(G_p,v_p)$ as follows: Find $(G_i,v_i) \in \Gcal$ with $B_i = B_{G_i}(v_i,i)$, take a disjoint union of all $B_{G_i}(v_i,i)$ and identify $B_{G_{i-1}}(v_{i-1},i-1)$ and $B_{G_i}(v_i,i-1)$ via an automorphism---such an automorphism must exist, otherwise there would not be an edge connecting $B_{i-1}$ to $B_i$. Let $v_p$ be the vertex obtained from identifying all centres $v_i$. 

\begin{lemma}
\label{lem:inclosure}
Let $\Gcal$ be a class of countable rooted graphs. A rooted graph $(G,v)$ is in the closure $\overline \Gcal$ of $\Gcal$ if and only if it is of the form $(G_p,v_p)$ for some infinite path $p$ in $T(\Gcal)$ as above.
\end{lemma}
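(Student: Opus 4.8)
The plan is to prove the two implications separately, using throughout the following elementary fact about balls (recall that $B_G(v,r)$ is the subgraph of $G$ induced on $\{u \in V(G) : d_G(u,v) \le r\}$): for $i \le j$, the radius-$i$ ball around $v$ computed inside $B_G(v,j)$ is exactly $B_G(v,i)$, because any shortest $v$--$u$-path of length at most $i$ stays inside $B_G(v,i)$ and hence distances from $v$ are unaffected by passing to a larger ball. Consequently, any rooted isomorphism $B_G(v,j) \to B_{G'}(v',j)$ restricts to a rooted isomorphism $B_G(v,i) \to B_{G'}(v',i)$ for every $i \le j$.

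For the forward implication, I would assume $(G,v) \in \overline{\Gcal}$ and pick $(G_n,v_n) \in \Gcal$ with $d((G_n,v_n),(G,v)) \to 0$; unwinding the metric, this says that for every $r$ there is $N_r$ such that $B_{G_n}(v_n,r)$ and $B_G(v,r)$ are isomorphic as rooted graphs whenever $n \ge N_r$. In particular each $B_G(v,r)$ lies in $\Bcal(\Gcal)$, so $p := (B_G(v,0),B_G(v,1),\dots)$ is a sequence of vertices of $T(\Gcal)$ starting at the root $B_0$; moreover, fixing any $n \ge N_r$ and using the fact above, $B_G(v,r-1)$ and $B_G(v,r)$ are joined by an edge of $T(\Gcal)$, so $p$ is a ray. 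It then remains to run the construction of $(G_p,v_p)$ for this particular $p$ with the identifications chosen coherently: inductively, the graph obtained after $i$ steps can be made to be literally $B_G(v,i)$ rooted at $v$, by gluing the copy of $B_{G_i}(v_i,i)$ along the restriction to its radius-$(i-1)$ ball of a fixed rooted isomorphism $B_{G_i}(v_i,i) \to B_G(v,i)$. Since $G$ is connected we have $G = \bigcup_r B_G(v,r)$, so this yields $(G_p,v_p) = (G,v)$.

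For the reverse implication, let $p = (B_0,B_1,\dots)$ be any ray of $T(\Gcal)$ from the root and let $(G_p,v_p)$ be any rooted graph produced by the construction, so that $G_p$ is the increasing union $\bigcup_i \widetilde B_i$, where $\widetilde B_i$ is the copy of $B_{G_i}(v_i,i)$ inserted at stage $i$ (with $B_i \cong B_{G_i}(v_i,i)$). Using the fact above one checks that $\widetilde B_i$ is precisely $B_{G_p}(v_p,i)$: every vertex of $\widetilde B_i$ is within distance $i$ of $v_p$, and conversely a vertex at distance at most $i$ from $v_p$ lies on a finite path in $G_p$, hence inside some $\widetilde B_j$ with $j \ge i$, hence inside its radius-$i$ ball, which is $\widetilde B_i$. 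Therefore $B_{G_p}(v_p,i) \cong B_i \cong B_{G_i}(v_i,i)$, whence $d((G_p,v_p),(G_i,v_i)) \le \exp(-i) \to 0$ with $(G_i,v_i) \in \Gcal$. As $G_p$ is connected and countable (and locally finite whenever the graphs of $\Gcal$ are), this shows $(G_p,v_p) \in \overline{\Gcal}$.

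The conceptual content is minimal: a rooted graph is recovered from its nested sequence of balls, and the rays of $T(\Gcal)$ from the root index exactly the sequences that are consistent in this sense. The two spots that need a little care are (a) verifying $B_{G_p}(v_p,i) = \widetilde B_i$, i.e.\ that gluing balls ``from the outside'' neither creates shortcuts that shrink smaller balls nor omits vertices, and (b) in the forward direction, choosing the identifications in the construction coherently so that $(G_p,v_p)$ equals $(G,v)$ rather than being only ball-by-ball isomorphic to it. I expect (a) --- reconciling distances in the infinite union with distances in the finite pieces --- to be the main technical point, though it is routine given the elementary fact stated at the outset.
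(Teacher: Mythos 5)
Your proof is correct and follows essentially the same route as the paper's: a ray in $T(\Gcal)$ yields $(G_p,v_p)\in\overline{\Gcal}$ because $B_{G_p}(v_p,i)\cong B_{G_i}(v_i,i)$, and conversely a convergent sequence from $\Gcal$ produces the ray of balls of $(G,v)$. You merely spell out two points the paper leaves implicit --- that $B_{G_p}(v_p,i)$ really is the glued copy $\widetilde B_i$, and that the identifications can be chosen so that $(G_p,v_p)$ is literally $(G,v)$ --- both of which are handled correctly.
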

\begin{proof}
If $p$ is an infinite path as above, then $(G_p,v_p)$ satisfies $B_{G_p}(v_p,i) = B_i$. Since there is a graph $(G_i,v_i) \in \Gcal$ such that $B_i = B_{G_i}(v_i,i)$ and hence $d((G_p,v_p),(G_i,v_i)) \leq exp(-i)$ for every $i$ we conclude that $(G_p,v_p) \in \overline \Gcal$.

Conversely, if $(G,v)$ is contained in the closure, then for any $i \in \mathbb N$ there must be a graph $(G_i,v_i) \in \Gcal$ such that $d((G,v),(G_i,v_i)) \leq exp(-i)$, and the sequence of $B_{G_i}(v_i,i)$ gives the desired infinite path in $T(\Gcal)$.
\end{proof}

The second result provides a characterisation of compact and $\sigma$-compact graph classes; recall that a topological space is called $\sigma$-compact if it can be covered by countably many compact subsets, and let $T_\infty$ denote the tree in which every vertex has countably infinite degree.

\begin{lemma}
\label{lem:compactnesscriterion}
Let $\Gcal \subseteq \Gcal_\infty$ be a closed class of rooted graphs.
\begin{enumerate}
    \item $\Gcal$ is compact if and only if $T(\Gcal)$ is locally finite
    \item $\Gcal$ is $\sigma$-compact if and only if $T(\Gcal)$ does not contain a subdivision of $T_\infty$.
\end{enumerate}
\end{lemma}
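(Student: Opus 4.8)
The plan is to push everything through the homeomorphism between $\Gcal$ and the end space of $T(\Gcal)$, so that the two statements become facts about end spaces of trees. \textbf{Reduction.} Since $\Gcal$ is closed, Lemma~\ref{lem:inclosure} makes the assignment of the graph $(G_\epsilon,v_\epsilon)$ to a ray $\epsilon=(B_0,B_1,\dots)$ of $T(\Gcal)$ starting at the root a bijection between the set $\Omega:=\Omega(T(\Gcal))$ of such rays and $\Gcal$ (a graph in $\overline\Gcal=\Gcal$ for every ray, a ray for every graph, injectivity because two distinct rays already disagree on some finite ball), and the two metrics make it a homeomorphism. Note $T(\Gcal)$ is a \emph{pruned} countable tree, i.e.\ every vertex $B_G(v,r)$ has the child $B_G(v,r+1)$. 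I would record two elementary facts to be used throughout: (i) the end space of a locally finite tree is compact, since it embeds as a closed subspace of the countable product of its (finite) level sets; and (ii) if $K\subseteq\Omega$ is compact then the subtree $T_K:=\bigcup_{\epsilon\in K}\epsilon$ is locally finite --- otherwise an infinitely branching vertex of $T_K$ at level $r$ produces infinitely many ends of $K$ that are pairwise at distance $e^{-r}$, contradicting total boundedness. \textbf{Part (1)} is then short: if $T(\Gcal)$ is locally finite, $\Omega$ is compact by (i); if some $B\in\Bcal(\Gcal)$ at level $r$ has infinitely many children, picking for each child a graph of $\Gcal$ whose ray passes through it gives infinitely many graphs agreeing pairwise on their radius-$r$ ball but not on their radius-$(r{+}1)$ ball, an infinite $e^{-r}$-separated subset of $\Gcal$.

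\textbf{Part (2), the easy direction.} To see that a subdivision of $T_\infty$ in $T(\Gcal)$ prevents $\sigma$-compactness, suppose $\Omega=\bigcup_nK_n$ with each $K_n$ compact, and put $T_n:=T_{K_n}$, locally finite by (ii). Let $S\subseteq T(\Gcal)$ be a subdivision of $T_\infty$; after lengthening the root-to-$S$ path I may assume it ends at a \emph{branch vertex} of $S$, that is, a vertex of $S$ of infinite degree (an image of a vertex of $T_\infty$). I would build a ray $\epsilon$ greedily: having reached a branch vertex $w$ before stage $n$, I use that $w$ has only finitely many children in $T_n$, whereas the infinitely many downward branches of $S$ at $w$ start at pairwise distinct children of $w$; so I follow one that starts at a child outside $T_n$, down to the next branch vertex. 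The resulting ray $\epsilon$ leaves every $T_n$, hence lies in $\Omega\setminus\bigcup_nK_n$ although it is an end of $T(\Gcal)$, a contradiction.

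\textbf{Part (2), the hard direction --- the main obstacle.} It remains to prove that if $T:=T(\Gcal)$ contains no subdivision of $T_\infty$ then $\Omega(T)$ is $\sigma$-compact. The key device I would introduce is an ordinal rank $\mathrm{rk}$ on vertices of a pruned tree: $\mathrm{rk}(v)\le 0$ iff the subtree below $v$ is locally finite, and for $\alpha\ge 1$, $\mathrm{rk}(v)\le\alpha$ iff there is a \emph{finite} set $F$ of descendants of $v$ such that every descendant $w\notin F$ either has finitely many children or has all but finitely many children $c$ with $\mathrm{rk}(c)<\alpha$. This is a legitimate definition by recursion on $\alpha$ (not on $T$, which has infinite rays, so no bottom-up recursion is possible). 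Routine inductions give that $\mathrm{rk}$ does not increase along edges or under passing to a pruned subtree; that a subdivision of $T_\infty$ below $v$ forces $\mathrm{rk}(v)=\infty$; and that, conversely, if $\mathrm{rk}(v)=\infty$ then --- choosing $\alpha$ above the supremum of the finite ranks occurring below $v$ --- some descendant of $v$ has infinitely many children of infinite rank, from which a greedy construction produces a subdivision of $T_\infty$. Hence $T$ has no subdivision of $T_\infty$ exactly when $\mathrm{rk}$ is everywhere an ordinal. Finally one shows by induction on $\alpha$ that $\mathrm{rk}(\text{root})\le\alpha$ implies $\Omega(T)$ is $\sigma$-compact: the case $\alpha=0$ is (i); for $\alpha\ge 1$, along any ray the rank is eventually a constant $\beta\le\alpha$ and the ray eventually avoids the finite set $F$, so $\Omega(T)$ is a countable union --- over the stabilisation level $m$, the value $\beta$, and the level-$m$ vertex $u$ where stabilisation occurs --- of sets $\Omega(\tilde T_u)$, where $\tilde T_u$ is the pruned tree of descendants of $u$ all of whose vertices have rank $\beta$ and lie outside $F$; when $\beta=\alpha$ the defining property of $F$ forces $\tilde T_u$ to be locally finite, and when $\beta<\alpha$ one has $\mathrm{rk}_{\tilde T_u}(u)\le\beta<\alpha$ and applies the inductive hypothesis. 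I expect the delicate point to be exactly this choice of rank: the naïve $\sup_c\mathrm{rk}(c)+1$ is undefined on rays, and simply recursing $\Omega(T)=\bigsqcup_c\Omega(T_c)$ into children fails because the rank can remain constant along an entire ray; it is the finite exceptional set, together with splitting $\Omega(T)$ by the \emph{eventual} value of the rank rather than by its first drop, that makes the induction go through.
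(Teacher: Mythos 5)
Your proposal is correct, and for Part (1) and the "$\sigma$-compact $\Rightarrow$ no subdivision of $T_\infty$" direction of Part (2) it is essentially the paper's argument (the paper phrases the greedy escape from the locally finite trees $T(\Hcal_i)$ in terms of a sequence of branch points $B_1, B_2, \dots$ with $B_{i+1}$ a descendant of $B_i$ outside $\Bcal(\Hcal_i)$, then invokes Lemma~\ref{lem:inclosure} and closedness to see that the resulting graph lies in $\Gcal$ but in no $\Hcal_i$). Where you genuinely diverge is the hard direction of Part (2). The paper does not introduce a rank: it runs a transfinite peeling argument. At stage $\alpha$ it looks at $T_\alpha = T(\Gcal\setminus\bigcup_{\beta<\alpha}\Hcal_\beta)$, observes that if \emph{no} vertex of $T_\alpha$ had a locally finite descendant subtree one could greedily build a subdivision of $T_\infty$ (every vertex would have a descendant with infinitely many children, recursively), picks a vertex $B_\alpha$ whose descendant subtree is locally finite, and lets $\Hcal_\alpha$ be the (compact, by Part (1)) set of ends through $B_\alpha$. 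The point that replaces your entire rank machinery is that $B_\alpha$ is permanently removed from all later trees $T_\gamma$, so the recursion terminates after countably many steps simply because $T(\Gcal)$ has countably many vertices. Your route instead defines an ordinal rank with a finite exceptional set and inducts on the rank of the root; as far as I can check, the pieces fit (monotonicity of the rank along edges and under pruned subtrees, stabilisation of the rank along rays, local finiteness of $\tilde T_u$ when $\beta=\alpha$ via the defining property of $F$, and the extraction of a subdivision of $T_\infty$ from a vertex of rank $\infty$ by choosing $\alpha$ above all ordinal ranks in the countable tree). What your approach buys is a canonical Cantor--Bendixson-style complexity measure on such trees and a decomposition indexed by it; what it costs is considerable extra bookkeeping compared with the paper's observation that a greedy choice plus "each stage kills a vertex of a countable tree" already suffices. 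If you write your version up, the two places to be most careful are the well-definedness of the rank by recursion on $\alpha$ (monotonicity in $\alpha$ so that $\mathrm{rk}$ is the least witness) and the fact that the eventual rank value $\beta$ ranges over only countably many ordinals, so that your triple-indexed union really is countable.
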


\begin{proof}
For the first statement, note that if $T(\Gcal)$ is not locally finite, then there is some $i \in \N$ such that there are infinitely many non-isomorphic $B_{G}(v,i)$ with $(G,v) \in \Gcal$. Hence there is an infinite cover of $\Gcal$ whose elements are pairwise disjoint open balls of radius $\exp(-i+1)$. This cover clearly has no finite sub-cover, hence $\Gcal$ is not compact.

For the converse implication, recall that a metric space is compact if and only if it is sequentially compact. Let $(G_i,v_i)_{i \in \N}$ be a sequence of graphs in $\Gcal$. Since $T(\Gcal)$ is locally finite there are only finitely many non-isomorphic $k$-balls in $\Bcal(\Gcal)$ for every $k$. Hence, we can inductively find subsequences $(G_i,v_i)_{i \in I_k}$ with $\N \supseteq I_1 \supseteq I_2 \supseteq \dots$ such that $B_{G_i}(v_i,k)$ and $B_{G_j}(v_j,k)$ are isomorphic for any pair $i,j \in I_k$. Pick an increasing function $f(k)\colon \N \to \N$ such that $f(k) \in I_k$ for every $k$. The sequence $B_{G_{f(k)}}(v_{f(k)},k)$ forms an infinite path $p$ in $T(\Gcal)$. Since $\Gcal$ is closed, the rooted graph $(G_p,v_p)$ defined by this path lies in $\Gcal$ by Lemma~\ref{lem:inclosure}. Moreover $d((G_{f(k)},v_{f(k)}),(G_p,v_p)) \leq \exp(-k)$, so $(G_{f(k)},v_{f(k)})_{k \in \N}$ is a convergent subsequence of $(G_i,v_i)_{i \in \N}$.

Now let us turn to the second statement of the lemma. Let $\Gcal$ be $\sigma$-compact and pick a cover $\Gcal = \bigcup_{i \in \N} \Hcal_i$ where each $\Hcal_i$ is compact. Note that $T(\Hcal_i)$ is the subtree of $T(\Gcal)$ induced by $\Bcal(\Hcal_i)$, and that that this subtree is locally finite because $\Hcal_i$ is compact.

Assume for a contradiction that $T(\Gcal)$ contains a subdivision of $T_\infty$. Call a vertex $B$ a \emph{branch point}, if it corresponds to a vertex of the subdivision of $T_\infty$. Note that from every branch point $B$ we can find infinitely many edge disjoint paths to other branch points, and that all but one of these branch points are descendants of $B$. Since $T(\Hcal_i)$ is locally finite, the first edge of at least one of these paths does not lie in $T(\Hcal_i)$. Connectedness of $T(\Hcal_i)$ implies that the other endpoint of this path is a descendant of $B$ which does not lie in $\Bcal(\Hcal_i)$. 

Let $B_1$ be an arbitrary branch point, and for every $i \in \N$ let $B_{i+1}$ be a branch point which is a descendant of $B_i$ and does not lie in $\Bcal(\Hcal_i)$. Let $p$ be an infinite path starting at $B_0$ and passing through every $B_i$. The graph $(G_p,v_p)$ is contained in $\Gcal$ by Lemma~\ref{lem:inclosure}, but it is not contained in any $\Hcal_i$ since there is some $r_i$ such that $B_{G_p}(v_p,r_i)$ is isomorphic to $B_i \notin \Bcal(\Hcal_i)$.

Now assume that $T(\Gcal)$ does not contain a subdivision of $T_\infty$. We construct a decomposition of $\Gcal$ into countably many compact sets by transfinite induction. 

In step $\alpha$ of the construction, assume that we have defined compact sets $\Hcal_\beta \subseteq \Gcal$ for every $\beta<\alpha$ (for $\alpha=0$ this is vacuously true) and let $T_\alpha =  T(\Gcal\setminus \bigcup_{\beta < \alpha}\Hcal_\beta)$. If $T_\alpha$ has no vertex $B$ such that the subtree induced by all descendants of $B$ is locally finite, then $T_\alpha$ and thus also $T(\Gcal)$ contains a subdivision of $T_\infty$. Hence we can pick such a vertex $B_\alpha$ and let $\Hcal_\alpha$ be the set consisting of all $(G,v) \in \Gcal$ which correspond to an infinite path in $T_\alpha$ which passes through $B_\alpha$. The tree $T(\Hcal_\alpha)$ is the union of all these infinite paths in $T_\alpha$, it is locally finite because of our choice of $B_\alpha$, and hence $\Gcal_\alpha$ is compact. 

Note that no graph in $(\Gcal\setminus \bigcup_{\beta < \alpha}\Hcal_\beta) \setminus \Hcal_\alpha = \Gcal \setminus \bigcup_{\beta < \alpha+1}\Hcal_\beta$ contains a ball isomorphic to $B_\alpha$, hence $B_\alpha$ is not a vertex of any $T_\gamma$ for any $\gamma > \alpha$. The class $\Gcal$ only contains locally finite graphs, so all graphs in $\Bcal(\Gcal)$ are finite and thus $T(\Gcal)$ has only countably many vertices. Hence the above procedure will terminate after some countable number of steps thereby yielding the desired composition of $\Gcal$.
\end{proof}

\section{Graph classes with locally finite universal graphs}

In this section we investigate conditions for the existence of a locally finite graph containing a given graph class and prove Theorem \ref{thm:universal}. We start by giving a more technical condition which is true even if the graph class in question is not closed.

Recall that $\Gcal_\infty$ denotes the class of connected, locally finite graphs, and consequently $\Gcal_\infty^\bullet$ denotes the class of rooted connected, locally finite graphs.

\begin{theorem}
\label{thm:universal-notclosed}
Let $\Gcal \subseteq \Gcal_\infty$. The following are equivalent.
\begin{enumerate}
    \item 
    \label{itm:strongcontain-notclosed}
    There is $H \in \Gcal_\infty$ which strongly contains $\Gcal$.
    \item 
    \label{itm:contain-notclosed}
    There is $H \in \Gcal_\infty$ which contains $\Gcal$.
    \item 
    \label{itm:tree-notclosed}We can decompose $\Gcal^{\bullet}$ as a countable union $\bigcup_{i \in \N}\Hcal_i$ such that $T(\Hcal_i)$ is locally finite for every $i \in \N$.
\end{enumerate}
\end{theorem}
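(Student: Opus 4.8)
The plan is to prove the cycle of implications \ref{itm:strongcontain-notclosed} $\Rightarrow$ \ref{itm:contain-notclosed} $\Rightarrow$ \ref{itm:tree-notclosed} $\Rightarrow$ \ref{itm:strongcontain-notclosed}. The first of these is immediate, since every strong embedding is an embedding. For \ref{itm:contain-notclosed} $\Rightarrow$ \ref{itm:tree-notclosed} I would start from a graph $H\in\Gcal_\infty$ containing $\Gcal$ and, for every $w\in V(H)$, let $\Hcal_w$ be the class of all $(G,v)\in\Gcal^\bullet$ which admit an embedding of $G$ into $H$ sending $v$ to $w$. Since $H$ is connected and locally finite its vertex set is countable, and choosing for each $G\in\Gcal$ an embedding into $H$ shows $\Gcal^\bullet=\bigcup_{w\in V(H)}\Hcal_w$. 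To see that each $T(\Hcal_w)$ is locally finite it suffices to show that for every $r$ only finitely many isomorphism types of balls $B_G(v,r)$ with $(G,v)\in\Hcal_w$ occur --- these types are precisely the vertices of $T(\Hcal_w)$ at distance $r$ from the root, and in a rooted tree a vertex at distance $r$ from the root has neighbours only at distances $r-1$ and $r+1$, so finiteness of every level implies local finiteness. This finiteness holds because an embedding $\iota$ witnessing $(G,v)\in\Hcal_w$ restricts to an isomorphism of $B_G(v,r)$ onto a rooted subgraph of the fixed finite rooted graph $(B_H(w,r),w)$ (here I use that embeddings are injective and do not increase distances), and a finite rooted graph has only finitely many rooted subgraphs up to isomorphism.

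The substance of the theorem is \ref{itm:tree-notclosed} $\Rightarrow$ \ref{itm:strongcontain-notclosed}, which I would split into two steps. \emph{Step 1.} For any $\Hcal\subseteq\Gcal_\infty^\bullet$ with $T(\Hcal)$ locally finite, construct a connected, locally finite graph $H_\Hcal$ such that every graph underlying a member of $\Hcal$ has a strong embedding into $H_\Hcal$. Local finiteness of the rooted tree $T(\Hcal)$ makes each level $\mathcal{L}_r$ (the vertices at distance $r$ from the root $B_0$) finite. I would build $H_\Hcal$ by induction on $r$, maintaining for every vertex $\beta$ of $T(\Hcal)$ at level $r$ a designated induced copy of the ball $\beta$ in the graph built so far, all of these copies sharing one root vertex $w_0$, and so that the copy of $\beta$ is exactly the ball of radius $r$ around $w_0$ in the copy of every child of $\beta$. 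The copy of $B_0$ is $\{w_0\}$; to pass from level $r-1$ to level $r$, for every $\beta\in\mathcal{L}_{r-1}$ and every child $\beta'$ of $\beta$ introduce a fresh set of vertices in bijection with the vertices of $\beta'$ at distance exactly $r$ from its root and join them, exactly as prescribed by $\beta'$, to one another and to the already built copy of $\beta$, declaring the copy of $\beta'$ to be the copy of $\beta$ together with these new vertices. Two observations make this work. First, in a ball of radius $r$ a vertex at distance exactly $r$ has neighbours only at distances $r-1$ and $r$, so the new vertices attach only to the outermost layer of the copy of $\beta$; this keeps all designated copies induced and properly nested, and the construction simultaneously yields, for each $\beta$, an isomorphism from $\beta$ onto its copy, compatibly along $T(\Hcal)$. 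Second, a new vertex receives edges only at the step at which it is created and at the single following step, and at the following step it gains only finitely many edges because the ball it lies in has only finitely many children in $T(\Hcal)$; hence every vertex of $H_\Hcal$ has finite degree. The graph $H_\Hcal$ is connected because every newly added vertex is joined to the copy of the parent ball, ultimately to $w_0$. Finally, for $(G,v)\in\Hcal$ with associated ray $(B_0,B_1,B_2,\dots)$ in $T(\Hcal)$, the compatible isomorphisms combine to an isomorphism of $G$ onto the union of the designated copies of the $B_r$, and this union is an induced subgraph of $H_\Hcal$ (being a nested union of induced subgraphs) --- note that adjacency and non-adjacency of two vertices of $G$ are already decided inside a sufficiently large ball $B_G(v,r)$, which is an induced subgraph of $G$.

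\emph{Step 2.} Given \ref{itm:tree-notclosed}, write $\Gcal^\bullet=\bigcup_{i\in\N}\Hcal_i$ with every $T(\Hcal_i)$ locally finite, put $H_i:=H_{\Hcal_i}$ from Step 1, choose $x_i\in V(H_i)$, and let $H$ be the disjoint union of the $H_i$ together with the additional edges $x_ix_{i+1}$ for $i\in\N$. Then $H$ is connected and locally finite, so $H\in\Gcal_\infty$; moreover the only edges of $H$ with both endpoints in a single $V(H_i)$ are the edges of $H_i$ itself, so any strong embedding into some $H_i$ is still a strong embedding into $H$. Since every $G\in\Gcal$ satisfies $(G,v)\in\Hcal_i$ for some $i$ and some $v\in V(G)$, the graph $H$ strongly contains $\Gcal$, which completes the argument. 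The step I expect to require the most care is Step 1: one has to set up the bookkeeping of the nested designated copies --- which are indexed by the isomorphism classes of balls forming $V(T(\Hcal))$ --- so that they are genuinely nested and carry no unintended edges, and one has to verify carefully that, since a vertex acquires edges at only two consecutive stages and $T(\Hcal)$ is locally finite, every vertex of $H_\Hcal$ indeed ends up with finite degree.
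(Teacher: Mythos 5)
Your overall strategy coincides with the paper's: the same cycle of implications, essentially the same decomposition $\Gcal^\bullet=\bigcup_{w\in V(H)}\Hcal_w$ for \ref{itm:contain-notclosed}$\Rightarrow$\ref{itm:tree-notclosed}, and for \ref{itm:tree-notclosed}$\Rightarrow$\ref{itm:strongcontain-notclosed} the same graph $H_{\Hcal}$ obtained by gluing the balls in $\Bcal(\Hcal)$ along parent--child identifications. The one genuine gap is in the last step of your Step 1. The ``compatible isomorphisms'' your construction provides are isomorphisms $\phi_\beta$ from a fixed abstract representative $\hat\beta$ of each vertex $\beta$ of $T(\Hcal)$ onto its designated copy, compatible along the fixed identifications $\iota_{\beta'}\colon\hat\beta\to\hat\beta'$ used to attach the new layer. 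To conclude that a concrete $(G,v)\in\Hcal$ with ray $(B_0,B_1,\dots)$ embeds, you additionally need isomorphisms $\gamma_r\colon B_G(v,r)\to\hat B_r$ that are themselves compatible, i.e.\ the restriction of $\gamma_{r+1}$ to $B_G(v,r)$ equals $\iota_{B_{r+1}}\circ\gamma_r$; only then do the maps $\phi_{B_r}\circ\gamma_r$ glue to a single embedding of $G$. Such a compatible family cannot be chosen greedily: an arbitrary isomorphism $B_G(v,r+1)\to\hat B_{r+1}$ restricts on $B_G(v,r)$ to $\iota_{B_{r+1}}\circ\gamma_r$ only up to a root-preserving automorphism of the radius-$r$ sub-ball, and such an automorphism need not extend to the larger ball (take $v$ with two neighbours $a,b$ of which only $a$ has a further neighbour: swapping $a$ and $b$ is an automorphism of $B_G(v,1)$ that does not extend to $B_G(v,2)$). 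So a fixed $\gamma_r$ may admit no extension $\gamma_{r+1}$, and ``the compatible isomorphisms combine to an isomorphism of $G$ onto the union of the copies'' does not follow as stated.

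What is true is that the union $U$ of the designated copies along the ray satisfies $B_U(w_0,r)\cong B_G(v,r)$ for every $r$, and for connected locally finite graphs this does imply $U\cong G$ --- but that implication is precisely the nontrivial point. The paper closes this gap with a compactness argument: since $B_{H_i}(v_i,r)$ is finite, for each $r$ there are only finitely many strong embeddings of $B_G(v,r)$ into $H_i$, so a diagonal/K\"onig's lemma argument selects embeddings $\iota_r$ of the balls whose restrictions stabilise, and the common extension is the desired strong embedding of $G$. Adding this step (or an equivalent inverse-limit argument over the finite nonempty sets of isomorphisms $B_G(v,r)\to\hat B_r$ admitting extensions to all higher levels) completes your proof; the rest of the argument, including the degree bound and the verification that the designated copies are induced and nested, checks out.
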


Note that the classes $\Hcal_i$ in statement \ref{itm:tree-notclosed} are classes of rooted graphs. Indeed, the theorem becomes false if we decompose $\Gcal$ instead of $\Gcal^\bullet$: even a class $\Gcal$ consisting of a single locally finite graph can contain infinitely many non-isomorphic balls of radius $1$.

\begin{proof}
Condition \ref{itm:strongcontain-notclosed} trivially implies condition \ref{itm:contain-notclosed}.

For the implication \ref{itm:contain-notclosed} $\implies$ \ref{itm:tree-notclosed} let $H \in \Gcal_\infty$ be a graph which contains $\Gcal$ and let $(v_i)_{i \in \N}$ be an enumeration of the vertices of $H$. For each vertex $v_i$ let $\Hcal_i'$  be the class of all rooted graphs $(G,v_i)$ where $G$ is a subgraph of $H$ that contains $v_i$. Since $H$ is locally finite there are only finitely many subgraphs of diameter at most $r$ containing any fixed $v_i$, and thus $T(\Hcal_i')$ is locally finite for every $i$. Since $H$ contains $\Gcal$ we know that $\Gcal^\bullet \subseteq \bigcup_{i \in \N}\Hcal_i'$. Letting $\Hcal_i = \Gcal^\bullet \cap \Hcal_i'$ gives the desired decomposition; $T(\Hcal_i)$ is locally finite because it is a subtree of  $T(\Hcal_i')$.

For the final implication \ref{itm:tree-notclosed} $\implies$ \ref{itm:strongcontain-notclosed}, we first note that it suffices to find for each $i$ a rooted graph $(H_i,v_i) \in \Gcal_\infty^\bullet$ which strongly contains $\Hcal_i$. From the disjoint union of these graphs, we can construct a connected, locally finite graph $H$ by connecting $v_i$ to $v_{i+1}$ every $i \in \N$. This graph $H$ strongly contains $\Gcal$ because any strong embedding of a graph $(G,v) \in \Hcal_i$ into $(H_i,v_i)$ induces a strong embedding of $H$ into $G$.

For the construction of the graphs $(H_i,v_i)$ we denote by $\Bcal_i$ the set $\{B_G(v,r) \mid (G,v) \in \Hcal_i, r \in \N\}$. As before, two balls $B_G(v,r)$ and $B_{G'}(v',r)$ are considered the same if there is an isomorphism between them which maps $v$ to $v'$. For $B = B_G(v,r) \in \Bcal$ we call $B_G(v,r-1)$ the \emph{interior} $B^\circ$ of $B$; note that this does not depend on the specific choice of $(G,v) \in \Hcal_i$. For each $B \in \Bcal$ we fix a strong embedding $\iota_B\colon B^\circ\to B$ which maps the centre of $B^\circ$ to the centre of $B$. Clearly such an embedding always exists; for instance we can pick some $(G,v)$ such that $B= B_G(v,r)$ and take the restriction of the identity map to $B_G(v,r-1)$. 

Now define a graph $H_i$ as follows. Start with a disjoint union of all $B \in \Bcal_i$, and for each $B \in \Bcal_i$ identify every $v \in V(B^\circ)$ with $\iota_B(v)$. Clearly, the central vertices of all balls in $\Bcal_i$ are identified to a single vertex of $H_i$, this vertex $v_i$ is the designated root of $(H_i,v_i)$.

The rooted graph $(H_i,v_i)$ is connected because each $B = B_G(v,r)$ is connected and all copies of root vertices of balls are identified. It follows from the definition of the maps $\iota_B$ that any pair of vertices which is identified has the same distance from the centres of the respective balls.
Consequently the vertex set of $B_{H_i}(v,r)$ consists of the vertices of all $B_G(v,r)$ for $(G,v) \in \Hcal_i$. Each such $B_G(v,r)$ is finite because $\Gcal \subseteq \Gcal_\infty$, and there are only finitely many non-isomorphic balls of radius $r$ since otherwise $T(\Hcal_i)$ would not be locally finite. Thus $H_i$ is locally finite.

It only remains to show that any $(G,v) \in \Hcal_i$ has a strong embedding into $(H_i,v_i)$. For this purpose, first note that if a vertex $u \in B$ is identified with a vertex $u' \in B'$ a vertex $v \in B$ is identified with a vertex $v' \in B'$, then $uv \in E(B) \iff u'v' \in E(B')$. In particular, since $B_G(v,r)$ is isomorphic to some $B \in \Bcal_i$, there is a strong embedding $\iota_r\colon B_G(v,r) \to (H_i,v_i)$ for every $r$. 

We can construct the desired strong embedding $\iota\colon (G,v) \to (H_i,v_i)$ from these strong embeddings $\iota_r$ by a standard compactness argument. Since $B_{H_i}(v,r)$ is finite for every $r$, there are only finitely many ways to embed $B_G(v,r)$ into $H_i$. Hence there is an infinite subset $I_1 \subseteq \N$ such that the restrictions of $\iota_i$ and $\iota_j$ to $B_G(v,1)$ coincide for any pair $i,j \in I_1$. Inductive application of this argument gives a set $I_r \subseteq I_{r-1}$ for every $r \geq 2$ such that the restrictions of $\iota_i$ and $\iota_j$ to $B_G(v,r)$ coincide for any pair $i,j \in I_r$.

For $x \in V(G)$ whose distance from $v$ is at most $r$, we pick any $i \in I_r$ define $\iota(x) = \iota_i(x)$. We note that this does not depend on the specific choice of $r$ or $i$ because $\iota_i(x)=\iota_j(x)$ for any $x \in B_G(v,r)$ and any pair $i,j \in I_r = \bigcup _{s \geq r} I_s$. Since all $\iota_i$ are strong embeddings, so is $\iota$.
\end{proof}

\begin{corollary}
\label{cor:universal}
For every $d \in \N$ there is a graph $H \in \Gcal_\infty$ strongly contains $\Gcal_d$.
\end{corollary}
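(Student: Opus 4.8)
The plan is to apply Theorem~\ref{thm:universal-notclosed} to $\Gcal = \Gcal_d$, verifying condition~\ref{itm:tree-notclosed} via the trivial decomposition in which $\Hcal_0 = \Gcal_d^\bullet$ and $\Hcal_i = \emptyset$ for $i \geq 1$. It then suffices to show that $T(\Gcal_d^\bullet)$ is locally finite, and the implication \ref{itm:tree-notclosed} $\implies$ \ref{itm:strongcontain-notclosed} produces an $H \in \Gcal_\infty$ which strongly contains $\Gcal_d$.

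The only substantive ingredient is a crude bound on ball sizes. If $G$ is a graph with all vertex degrees strictly less than $d$, then for every $v \in V(G)$ there are at most $(d-1)^k$ vertices at distance exactly $k$ from $v$, so $B_G(v,r)$ has at most $\sum_{k=0}^{r}(d-1)^k$ vertices, a number depending only on $d$ and $r$. Since there are only finitely many isomorphism types of rooted graphs on a bounded number of vertices, the set $\Bcal(\Gcal_d^\bullet)$ contains only finitely many balls of each fixed radius $r$. Consequently each vertex of $T(\Gcal_d^\bullet)$ lying at level $r-1$ has exactly one neighbour at level $r-2$ (its interior) and only finitely many neighbours at level $r$, so $T(\Gcal_d^\bullet)$ is locally finite, as required.

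I do not anticipate any genuine obstacle here: the corollary is essentially the observation that bounded degree forces bounded ball size, hence local finiteness of $T(\Gcal_d^\bullet)$, hence condition~\ref{itm:tree-notclosed}. The one point to keep in mind is that the relevant count is of \emph{rooted} balls, as in the definition of $\Bcal(\Gcal^\bullet)$ and as emphasised in the remark after Theorem~\ref{thm:universal-notclosed}; the degree bound already makes this rooted count finite. Alternatively one could deduce the corollary from Theorem~\ref{thm:universal}: one checks that $\Gcal_d$ is closed (being defined by the finite-diameter forbidden subgraph $K_{1,d}$), and observes that the locally finite tree $T(\Gcal_d)$ cannot contain a subdivision of $T_\infty$, since every branch point of such a subdivision would have infinite degree.
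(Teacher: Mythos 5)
Your proposal is correct and follows essentially the same route as the paper: both verify that the degree bound forces only finitely many isomorphism types of balls of each radius, hence $T(\Gcal_d) = T(\Gcal_d^\bullet)$ is locally finite, and then invoke Theorem~\ref{thm:universal-notclosed}. Your version merely spells out the ball-size estimate and the trivial one-part decomposition explicitly.
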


\begin{proof}
There are only finitely many non-isomorphic graphs of diameter at most $r$ in which every vertex has degree less than $d$. Hence the tree $T(\Gcal_d) = T(\Gcal_d^\bullet)$ is locally finite and Theorem \ref{thm:universal-notclosed} concludes the proof.
\end{proof}

\begin{corollary}
\label{cor:universal-all}
There is a graph $H \in \Gcal_\infty$ which strongly contains $\Gcal = \bigcup_{d \in N}\Gcal_d$.
\end{corollary}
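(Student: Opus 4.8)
The plan is to recognise this as an immediate consequence of Theorem~\ref{thm:universal-notclosed}. First I would observe that $\Gcal^\bullet = \bigcup_{d \in \N}\Gcal_d^\bullet$: a rooted graph $(G,v)$ lies in the left-hand side precisely when $G$ is connected and has all degrees strictly less than $d$ for some $d \in \N$, which is exactly the condition for $G \in \Gcal = \bigcup_{d \in \N}\Gcal_d$. This already exhibits $\Gcal^\bullet$ as a countable union of the rooted graph classes $\Hcal_d := \Gcal_d^\bullet$.

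Next I would invoke the fact established in the proof of Corollary~\ref{cor:universal} that $T(\Gcal_d) = T(\Gcal_d^\bullet)$ is locally finite for every $d \in \N$, since there are only finitely many non-isomorphic graphs of any fixed diameter in which every vertex has degree less than $d$. Hence the decomposition $\Gcal^\bullet = \bigcup_{d \in \N}\Hcal_d$ witnesses condition~\ref{itm:tree-notclosed} of Theorem~\ref{thm:universal-notclosed}, and the implication \ref{itm:tree-notclosed}~$\implies$~\ref{itm:strongcontain-notclosed} yields a graph $H \in \Gcal_\infty$ which strongly contains $\Gcal$.

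There is essentially no obstacle here; the only subtlety worth emphasising is the one already flagged after Theorem~\ref{thm:universal-notclosed}, namely that it is crucial to decompose the rooted class $\Gcal^\bullet$ rather than $\Gcal$ itself. A single graph in $\Gcal$ may already have unbounded degree and hence contribute infinitely many non-isomorphic balls of radius $1$ to $T(\Gcal)$, so no analogous decomposition of the unrooted class exists; passing to $\Gcal^\bullet$ (equivalently, to the $\Hcal_i$ being classes of \emph{rooted} graphs) circumvents this. One should also note, to avoid apparent conflict with the non-existence of a locally finite graph containing all of $\Gcal_\infty$, that $\Gcal = \bigcup_{d \in \N}\Gcal_d$ is a proper subclass of $\Gcal_\infty$: it consists exactly of the connected graphs of \emph{bounded} degree, whereas a locally finite graph need not have bounded degree.
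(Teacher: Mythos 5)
Your proof is correct and takes essentially the same route as the paper: decompose $\Gcal^\bullet = \bigcup_{d\in\N}\Gcal_d^\bullet$, note that each $T(\Gcal_d^\bullet)$ is locally finite, and apply Theorem~\ref{thm:universal-notclosed}. The additional remarks about rooted versus unrooted decompositions and about $\Gcal$ being the class of bounded-degree graphs are accurate but not needed for the argument.
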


\begin{proof}
Decompose $\Gcal^\bullet = \bigcup_{d \in \N}\Gcal_d^\bullet$, note that  $T(\Gcal_d^\bullet)$ is locally finite, and apply Theorem~\ref{thm:universal-notclosed}.
\end{proof}

Theorem \ref{thm:universal-notclosed} can also be used to show that graph classes whose members have bounded growth admit a locally finite graph containing them. For example, recall that a graph $G$ is said to have polynomial growth if there is some polynomial $P$ and a vertex $v \in v(G)$ such that $B_G(v,r)$ contains at most $P(r)$ vertices for every $r \in \N$.

\begin{corollary}
\label{cor:universal-poly}
There is a graph $H \in \Gcal_\infty$ which strongly contains the class $\Gcal$ of all graphs of polynomial growth.
\end{corollary}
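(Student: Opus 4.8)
The plan is to apply Theorem~\ref{thm:universal-notclosed} by exhibiting a decomposition of $\Gcal^\bullet$ into countably many pieces, each of which gives a locally finite tree $T(\Hcal_i)$. The natural indexing set is the countable set of polynomials with nonnegative integer coefficients: for each such polynomial $P$, let $\Hcal_P$ be the class of rooted graphs $(G,v)$ with $G$ of polynomial growth such that $|B_G(v,r)| \le P(r)$ for every $r \in \N$. Since every graph of polynomial growth satisfies such a bound for some polynomial at some vertex---and one may round coefficients up to integers---we have $\Gcal^\bullet = \bigcup_P \Hcal_P$, a countable union.

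The key step is to verify that $T(\Hcal_P)$ is locally finite for each $P$. A vertex of $T(\Hcal_P)$ at distance $r$ from the root $B_0$ is a ball $B_G(v,r)$ arising from some $(G,v) \in \Hcal_P$; by definition such a ball has at most $P(r)$ vertices. There are only finitely many graphs on at most $P(r)$ vertices up to isomorphism, hence only finitely many non-isomorphic rooted balls of radius $r$ occurring in $\Hcal_P$, so $T(\Hcal_P)$ has only finitely many vertices at each level. This immediately gives local finiteness. Then Theorem~\ref{thm:universal-notclosed} (implication \ref{itm:tree-notclosed} $\implies$ \ref{itm:strongcontain-notclosed}) yields the desired $H \in \Gcal_\infty$ strongly containing $\Gcal$.

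The one point requiring a small amount of care is whether $\Gcal$, the class of \emph{unrooted} graphs of polynomial growth, is correctly captured: a graph $G$ has polynomial growth if $|B_G(v,r)| \le P(r)$ for \emph{some} vertex $v$, but balls around other vertices could grow faster in the constant, though not in the degree of the polynomial. This is harmless for the argument because Theorem~\ref{thm:universal-notclosed} is stated for $\Gcal^\bullet$, and $(G,v)$ only needs to land in \emph{some} $\Hcal_P$; if $|B_G(u,r)| \le P(r)$ then for any other vertex $w$ with $d(u,w) = s$ we have $|B_G(w,r)| \le |B_G(u,r+s)| \le P(r+s)$, which is again a polynomial in $r$, so every rooted graph $(G,w)$ with $G$ of polynomial growth lies in $\Hcal_{P'}$ for a suitable polynomial $P'$. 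I expect no genuine obstacle here; the proof is essentially a one-line appeal to Theorem~\ref{thm:universal-notclosed} once the indexing by polynomials is set up.
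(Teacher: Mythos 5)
Your proof is correct and follows essentially the same route as the paper: decompose $\Gcal^\bullet$ into countably many classes indexed by explicit polynomial growth bounds (the paper uses the family $a\cdot r^b$ where you use general integer-coefficient polynomials), observe that each piece has a locally finite tree because there are only finitely many graphs on a bounded number of vertices, and invoke Theorem~\ref{thm:universal-notclosed}. Your remark about re-rooting (bounding $|B_G(w,r)|$ by $|B_G(u,r+s)|$) correctly fills in the paper's ``it is not hard to see'' step.
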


\begin{proof}
Denote by $\Gcal(a,b)$ the class of all rooted graphs $(G,v)$ such that $B_G(v,r)$ contains at most $a\cdot r^b$ vertices for every $r$. It is not hard to see that $\Gcal^\bullet = \bigcup_{a,b \in \N} \Gcal(a,b)$. There are only finitely many non-isomorphic connected graphs on at most $a\cdot r^b$ vertices, hence $T(\Gcal(a,b))$ is locally finite and we can apply Theorem~\ref{thm:universal-notclosed}.
\end{proof}

Polynomial growth in the above corollary can of course be replaced by any growth bound, as long as there is a countable set $(f_i)_{i \in N}$ of functions such that for every allowed growth function $g$ there is an $i$ such that $g(x) \leq f_i(x)$ for every $x \in \N$.

While the above corollaries demonstrate that Theorem \ref{thm:universal-notclosed} is useful for showing that there is a graph $H \in \Gcal_\infty$ containing a certain graph class, the theorem is not quite as useful for showing the non-existence of such a $H$. For closed graphs the following theorem (which implies Theorem \ref{thm:universal}) gives a condition which is easier to falsify.



\begin{theorem}
\label{thm:universal-closed}
Let $\Gcal \subseteq \Gcal_\infty$ be a closed graph class. The following are equivalent.
\begin{enumerate}
    \item 
    \label{itm:contain-closed}
    There is $H \in \Gcal_{\infty}$ which contains $\Gcal$.
    \item 
    \label{itm:sigmacompact-closed}
    $\Gcal^\bullet$ is $\sigma$-compact.
    \item 
    \label{itm:tree-closed}
    $T(\Gcal)$ contains no subdivision of $T_\infty$.
\end{enumerate}
\end{theorem}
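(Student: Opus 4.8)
The plan is to derive this theorem by combining Theorem~\ref{thm:universal-notclosed} with Lemma~\ref{lem:compactnesscriterion}, using closedness of $\Gcal$ to bridge the gap between the two. First I would establish the equivalence of \ref{itm:sigmacompact-closed} and \ref{itm:tree-closed}: this is immediate from the second part of Lemma~\ref{lem:compactnesscriterion} applied to $\Gcal^\bullet$, provided $\Gcal^\bullet$ is itself closed as a subset of $\Gcal_\infty^\bullet$. I would check this last point separately---the definition in the preliminaries says $\Gcal$ is closed when $\Gcal^\bullet$ is closed in $\Gcal_\infty^\bullet$, so there is actually nothing to prove here, but I would spell out that the hypothesis is exactly what Lemma~\ref{lem:compactnesscriterion} needs. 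So the only real content is linking these conditions to \ref{itm:contain-closed}.

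For \ref{itm:contain-closed} $\Rightarrow$ \ref{itm:sigmacompact-closed}, I would argue as follows. By Theorem~\ref{thm:universal-notclosed}, the existence of $H \in \Gcal_\infty$ containing $\Gcal$ gives a decomposition $\Gcal^\bullet = \bigcup_{i \in \N}\Hcal_i$ with each $T(\Hcal_i)$ locally finite. The subtlety is that the $\Hcal_i$ need not be closed, so I cannot directly invoke Lemma~\ref{lem:compactnesscriterion} to conclude each $\Hcal_i$ is compact. Instead I would pass to closures: set $\overline{\Hcal_i}$ (closure taken inside $\Gcal_\infty^\bullet$). Since $\Gcal^\bullet$ is closed, $\overline{\Hcal_i} \subseteq \overline{\Gcal^\bullet} = \Gcal^\bullet$, so the $\overline{\Hcal_i}$ still cover $\Gcal^\bullet$. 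By Lemma~\ref{lem:inclosure}, $\overline{\Hcal_i}$ consists of the graphs $(G_p,v_p)$ arising from infinite paths $p$ in $T(\Hcal_i)$, so $T(\overline{\Hcal_i})$ is exactly the union of those infinite paths together with the finite-radius balls along them---in particular it is a subtree of $T(\Hcal_i)$, hence locally finite. Now Lemma~\ref{lem:compactnesscriterion}(1) applies to the closed class $\overline{\Hcal_i}$ and shows it is compact, so $\Gcal^\bullet = \bigcup_i \overline{\Hcal_i}$ exhibits $\sigma$-compactness.

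For the reverse direction \ref{itm:sigmacompact-closed} $\Rightarrow$ \ref{itm:contain-closed}, I would take a cover $\Gcal^\bullet = \bigcup_{i}\Hcal_i$ by compact sets and apply Lemma~\ref{lem:compactnesscriterion}(1) to each (they are closed, being compact subsets of the metric space $\Gcal_\infty^\bullet$) to get that each $T(\Hcal_i)$ is locally finite; then condition~\ref{itm:tree-notclosed} of Theorem~\ref{thm:universal-notclosed} is satisfied, which yields $H \in \Gcal_\infty$ strongly containing---in particular containing---$\Gcal$. This also closes the loop, and as a byproduct gives the "strongly contains" statement, so Theorem~\ref{thm:universal} follows at once by intersecting with the trivially-true implication \ref{itm:strongcontain-notclosed} $\Rightarrow$ \ref{itm:contain-notclosed}.

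The main obstacle I anticipate is the first implication: one must resist the temptation to apply the compactness criterion directly to the possibly-non-closed pieces $\Hcal_i$ coming out of Theorem~\ref{thm:universal-notclosed}, and instead verify carefully that replacing each $\Hcal_i$ by its closure keeps both properties we need---staying inside $\Gcal^\bullet$ (this is where closedness of $\Gcal$ is used in an essential way) and keeping the tree $T(\overline{\Hcal_i})$ locally finite (this needs Lemma~\ref{lem:inclosure} to identify the closure concretely). Everything else is bookkeeping with the two lemmas already proved.
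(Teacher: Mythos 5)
Your proposal is correct and follows essentially the same route as the paper: equivalence of \ref{itm:sigmacompact-closed} and \ref{itm:tree-closed} via Lemma~\ref{lem:compactnesscriterion}, the implication \ref{itm:contain-closed} $\Rightarrow$ \ref{itm:sigmacompact-closed} by taking the decomposition from Theorem~\ref{thm:universal-notclosed} and passing to closures inside the closed set $\Gcal^\bullet$, and the converse by feeding a compact cover back into Theorem~\ref{thm:universal-notclosed}. Your added justification that $T(\overline{\Hcal_i})$ remains locally finite (via Lemma~\ref{lem:inclosure}) is a detail the paper leaves implicit, but it is the right one.
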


\begin{proof}
Statements \ref{itm:sigmacompact-closed} and \ref{itm:tree-closed} are equivalent by Lemma~\ref{lem:compactnesscriterion}, hence it suffices to show the equivalence of \ref{itm:contain-closed} and \ref{itm:sigmacompact-closed}.

For the implication \ref{itm:contain-closed} $\implies$ \ref{itm:sigmacompact-closed} we first note that by Theorem \ref{thm:universal-notclosed} (\ref{itm:tree-notclosed}) we can decompose $\Gcal^\bullet$ into countably many sets $\Hcal_i$ such that $T(\Hcal_i)$ is locally finite. Since $\Gcal^\bullet$ is closed, this implies that $\Gcal^\bullet = \bigcup_{i \in \N} \overline \Hcal_i$, where $\overline \Hcal_i$ denotes the closure of $\Hcal_i$ in $\Hcal_\infty^\bullet$. Lemma \ref{lem:compactnesscriterion} implies that $\Gcal^\bullet$ is $\sigma$-compact as claimed.

For the converse implication \ref{itm:sigmacompact-closed} $\implies$ \ref{itm:contain-closed} let $\Gcal^\bullet = \bigcup_{i \in \N}\Hcal_i$ where every $\Hcal_i$ is compact. By Lemma \ref{lem:compactnesscriterion}, $T(\Hcal_i)$ is locally finite for every $i$ and Theorem \ref{thm:universal-notclosed} (\ref{itm:tree-notclosed}) finishes the proof.
\end{proof}

As a corollary to the above theorem we obtain a result due to de Brujin mentioned in the introduction.

\begin{corollary}
$\Gcal_\infty$ contains no universal element.
\end{corollary}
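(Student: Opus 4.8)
The plan is to apply Theorem~\ref{thm:universal-closed} with $\Gcal = \Gcal_\infty$. First I would note that $\Gcal_\infty$ is trivially closed, since $\Gcal_\infty^\bullet$ is a closed subset of itself. A universal element for $\Gcal_\infty$ would in particular be a graph $H \in \Gcal_\infty$ containing $\Gcal_\infty$, so by the equivalence of conditions \ref{itm:contain-closed} and \ref{itm:tree-closed} in that theorem it suffices to show that $T(\Gcal_\infty) = T(\Gcal_\infty^\bullet)$ contains a subdivision of $T_\infty$; I would in fact establish the stronger statement that $T(\Gcal_\infty)$ has a subtree isomorphic to $T_\infty$.

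To build such a subtree, index the vertices of $T_\infty$ by finite sequences $s = (a_1,\dots,a_k)$ of positive integers and attach to each $s$ a concrete finite rooted tree $H_s$: take a path $v = u_0, u_1, \dots, u_k$ rooted at $v$, and for each $1 \le i \le k$ attach $a_i$ extra leaves to $u_{i-1}$. The point of this caterpillar construction is threefold: the root $v$ has eccentricity exactly $k = |s|$; hence $B_{H_s}(v,|s|) = H_s$; and deleting from $H_s$ the vertices at distance exactly $k$ from $v$ yields $H_{s^-}$, where $s^-$ denotes $s$ with its last entry removed. Setting $b_s := B_{H_s}(v,|s|) \in \Bcal(\Gcal_\infty^\bullet)$, the last two properties say precisely that, for nonempty $s$, the balls $b_{s^-}$ and $b_s$ are joined by an edge of $T(\Gcal_\infty)$, by the very definition of the tree structure on $\Bcal(\Gcal_\infty^\bullet)$. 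Thus $s \mapsto b_s$ sends the parent--child relation of $T_\infty$ to adjacencies in $T(\Gcal_\infty)$, and since each $s$ has countably many one-entry extensions, each $b_s$ acquires countably infinite degree; so the image is a copy of $T_\infty$.

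Two routine verifications then remain. I must check that $s \mapsto b_s$ is injective: sequences of different lengths give balls with different radius labels, and balls of different radii count as distinct vertices of $T(\Gcal_\infty)$ even when isomorphic as graphs; for sequences of the same length $k$, an induction on $k$ shows $H_s$ determines $s$, using $B_{H_s}(v,k-1) = H_{s^-}$ to recover $s^-$ and then reading off $a_k = |V(H_s)| - |V(H_{s^-})| - 1$. I must also note that each $b_s$ genuinely lies in $\Bcal(\Gcal_\infty^\bullet)$, which is immediate because $H_s$ is a finite connected (hence locally finite) graph realised as its own ball of radius $|s|$. Granting these, Theorem~\ref{thm:universal-closed} gives that no $H \in \Gcal_\infty$ contains $\Gcal_\infty$, so $\Gcal_\infty$ has no universal element.

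The one real design choice, and the step I expect to need the most care, is arranging the family $H_s$ so that the radius at which $H_s$ first stabilises as a ball around $v$ equals the depth of $s$ in $T_\infty$: this is exactly what makes consecutive $b_s$ differ by one in radius and lets the two tree structures line up, and it is the reason for hanging the $a_i$ leaves off $u_{i-1}$ rather than off $u_i$. As a sanity check, the corollary also follows from de~Bruijn's classical diagonal argument, which directly constructs a locally finite graph that embeds into no fixed locally finite graph.
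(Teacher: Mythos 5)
Your proof is correct and follows the same route as the paper: both apply Theorem~\ref{thm:universal-closed} after verifying that $\Gcal_\infty$ is closed and that $T(\Gcal_\infty)$ contains a subdivision of $T_\infty$. The only difference is that where you explicitly construct an embedded copy of $T_\infty$ via caterpillar trees, the paper simply observes that every ball of radius $n$ in a locally finite graph extends in infinitely many ways to a ball of radius $n+1$, so that $T(\Gcal_\infty)$ is equal to $T_\infty$ outright.
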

\begin{proof}
Note that $\Gcal_\infty$ is closed. Every ball of radius $n$ in a locally finite graph can be extended in infinitely many ways to a ball of radius $n+1$ in a locally finite graph. Hence $T(\Gcal_\infty) = T_\infty$ and by Theorem \ref{thm:universal-closed} there is no $H \in \Gcal_\infty$ containing $\Gcal_\infty$.
\end{proof}

Similar arguments can be made for many other graph classes. For instance, the exact same proof also shows the following result. 

\begin{corollary}
There is no graph $H \in \Gcal_\infty$ containing the class of all locally finite, connected, planar graphs.
\end{corollary}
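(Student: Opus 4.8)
The plan is to mimic exactly the structure of the preceding corollary, since planarity is a hereditary property preserved under the ball constructions. First I would observe that the class $\Gcal$ of locally finite, connected, planar graphs is closed: if a rooted graph $(G,v)$ is not planar, then by Kuratowski's theorem $G$ contains a finite subdivision of $K_5$ or $K_{3,3}$, which is entirely contained in some ball $B_G(v,r)$; this ball is non-planar, hence not isomorphic to any ball appearing in a planar graph, so $B_G(v,r) \notin \Bcal(\Gcal)$. (Equivalently, one checks that the complement of $\Gcal^\bullet$ in $\Gcal_\infty^\bullet$ is open in the metric topology.) By Theorem~\ref{thm:universal-closed} it then suffices to show that $T(\Gcal)$ contains a subdivision of $T_\infty$, or indeed simply that $T(\Gcal) = T_\infty$.

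Next I would verify that every vertex of $T(\Gcal)$ has countably infinite degree. A vertex of $T(\Gcal)$ is a ball $B_G(v,n)$ for some locally finite connected planar $G$. Its children are the balls $B_{G'}(v,n+1)$ with $B_{G'}(v,n) \cong B_G(v,n)$, and there are infinitely many pairwise non-isomorphic such extensions: starting from $B_G(v,n)$ we may attach, to a vertex on the sphere of radius $n$, a path of length one to $k$ new vertices for each $k \in \N$ (or attach increasingly long pendant paths), and each such extension remains planar since we only add pendant trees outside the ball, which never destroys planarity. These extensions are pairwise non-isomorphic and all live in locally finite connected planar graphs, so each vertex of $T(\Gcal)$ has infinitely many children; together with the single parent edge (for non-root vertices) this gives countably infinite degree, so $T(\Gcal) = T_\infty \supseteq$ a subdivision of $T_\infty$. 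Theorem~\ref{thm:universal-closed} then yields that no $H \in \Gcal_\infty$ contains $\Gcal$.

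The only subtle point — and the one I would double-check — is the closedness of $\Gcal$, i.e.\ that non-planarity is always witnessed \emph{within a finite ball}. This is where local finiteness is essential: a locally finite graph has finite balls, so a Kuratowski subdivision, being a finite subgraph, sits inside some $B_G(v,r)$. Without local finiteness this step could fail, but since $\Gcal \subseteq \Gcal_\infty$ by definition it goes through. Everything else is as routine as in the $\Gcal_\infty$ corollary, so the proof is essentially a one-line reduction: \emph{$\Gcal$ is closed and $T(\Gcal) = T_\infty$, so apply Theorem~\ref{thm:universal-closed}.}
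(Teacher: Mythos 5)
Your proposal is correct and follows exactly the route the paper intends: the paper proves this corollary by remarking that ``the exact same proof'' as for $\Gcal_\infty$ applies, namely that the class is closed, every ball extends in infinitely many non-isomorphic ways within the class so that $T(\Gcal)=T_\infty$, and Theorem~\ref{thm:universal-closed} finishes. Your Kuratowski argument for closedness and the pendant-tree extensions are precisely the details the paper leaves implicit.
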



\section{Graphs with bounded degrees}

By Corollary \ref{cor:universal}, there is a graph $H \in \Gcal_\infty$ containing $\Gcal_d$. On the other hand, it is known (see for instance \cite{zbMATH03877235}) that there is no $ H \in \Gcal_d$ containing $\Gcal_d$. This raises the natural question whether there is any $D < \infty$ such that there is a graph $H \in \Gcal_D$ containing $\Gcal_d$. Theorem \ref{thm:unbounded} from the introduction states that such a $D$ does not even exist for $\Gcal_4$; the remainder of this section is dedicated to the proof of this theorem.

\begin{proof}[Proof of Theorem \ref{thm:unbounded}]
Assume that there is a countable graph $H$ with maximum degree $\Delta$ containing $\Gcal_4$.

We define a subfamily of $\Gcal_4$ as follows. Let $[n]= \{1,2,\dots, n\}$ and $[m,n] = \{m,m+1,\dots, n\}$. Let $\Sym$ be the set of all bijective functions $s \colon \N \to \N$ such that $s([2^i]) = [2^i]$ for all $i \in \N$. For each $s \in \Sym$ let $G_s$ be the graph with vertex set $\mathbb Z$ and edges connecting $n$ to $n+1$ for every $n \in \mathbb Z$ as well as edges connecting $2n$ to $-2s(n)$ for every $n \in \mathbb N$; the reason we only connect even vertices is to get rid of unwanted automorphisms.
Denote by $G_s(i)$ the subgraph of $G_s$ induced by $[-2^{i+1},2^{i+1}]$. For every vertex $v \in V(H)$, let $\Sym(v,i)$ be the set of all $s \in \Sym$ such that there is an embedding of $G_s(i)$ into $H$ mapping $0$ to $v$.

Let $n(v,i)$ the number of non-isomorphic $G_s(i)$ with $s \in \Sym(v,i)$. 
Note that for any embedding of $G_s(i)$ which maps $0$ to $v$ there are at most $\Delta^{2^{i+1}}$ possibilities to embed the path induced by $[0,2^{i+1}]$, and similarly for the path induced by $[-2^{i+1},0]$. Once the paths are embedded, the image of $2n$ has fewer than $\Delta$ possible neighbours in the embedding of the path induced by $[-2^{i+1},0]$. Hence any embedding of the two paths extends to at most $\Delta^{2^i}$ embeddings of graphs $G_s(i)$ (some of which may be isomorphic). In total, this shows that 
\[n(v,i) \leq \Delta ^{5 \cdot 2^i}\]
for any $v \in V(H)$ and every $i \in \N$.

Define the set $\Sym(v) := \bigcap _{i \in \N} \Sym(v,i)$. We will show that at least one of the sets $\Sym(v)$ must be large enough to derive a contradiction to the above bound. Clearly, if there is an embedding of $G_s$ into $H$, then $s$ is contained in $\Sym(v)$ where $v$ is the image of $0$ under this embedding. Since by assumption every $G_s$ is a subgraph of $H$, this implies that $\Sym = \bigcup _{v \in V(H)} \Sym(v)$. It immediately follows that one of the $\Sym(v)$ must be uncountable. Unfortunately, this is not sufficient for our purpose as we need some control over the sets $\Sym(v,i)$; this is achieved by endowing $\Sym$ with a probability measure.

Note that any $s \in \Sym$ bijectively maps $[2^{i-1}+1,2^i]$ to itself. If we let $s_i$ be the permutation that $s$ induces on $[2^{i-1}+1,2^i]$, then the map $s \mapsto (s_i)_{i \in \mathbb N}$ is a bijection between $\Sym$ and $\prod_{i \in \N} S_{2^i}$ where $S_n$ denotes the symmetric group on $n$ elements. For $\sigma \in S_{2^i}$, we let $\Sym_{\sigma} =\{s \in \Sym \mid s_i = \sigma\}$. It is well known (for instance by the Kolmogorov extension theorem) that there is a probability measure on $\prod_{i \in \N} S_{2^i}$ whose projection on every factor is the uniform probability measure. By the above bijection, this defines a probability measure $\Prob$ on $\Sym$ such that $\Prob(\Sym_{\sigma}) = \frac{1}{(2^i)!}$ for every $\sigma \in S_{2^i}$.
The sets $\Sym(v,i)$ can be written as finite unions of intersections of finitely many $S_\sigma$, hence they are measurable with respect to $\Prob$, and thus the same is true for the sets $\Sym(v)$. By subadditivity of the probability measure we have \[\sum _{v \in V(H)}\Prob(\Sym(v)) \geq \Prob\left(\bigcup _{v \in V(H)} \Sym(v)\right) = \Prob(\Sym) = 1 >0,\] 
and thus there is at least one $v \in V(H)$ such that $\Prob(\Sym(v)) > 0$. From now on fix such a vertex~$v$ and let $\epsilon = \Prob(\Sym(v)) > 0$. Since $\Sym(v) \subseteq \Sym(v,i)$ we have
\[
    \epsilon \leq \Prob(\Sym(v,i)) = \Prob\left(\biguplus _{\sigma \in S_{2^i}} \Sym(v,i) \cap \Sym_\sigma\right) = \sum_{\sigma \in S_{2^i}} \Prob(\Sym(v,i) \cap \Sym_\sigma)
\]
for every $i \in \N$. Each summand on the right hand side is bounded above by $\Prob(\Sym_{\sigma}) = \frac{1}{(2^i)!}$, hence at least $\epsilon (2^i)!$ of the summands must be non-zero.

We claim that $n(v,i)$ is at least half as large as the number of non-zero summands in the above sum. To this end, it suffices to show that $G_s(i)$ and $G_t(i)$ are isomorphic if and only if the restrictions of $s$ and $t$ to $[2^i]$ either coincide or are inverses of one another. Note that the path $-2^{i+1},-2^{i+1}+1, \dots 2^{i+1}$ is the unique spanning path of $G_s(i)$ with no two consecutive vertices of degree $3$. Any isomorphism from $G_s(i)$ to $G_t(i)$ must map this path to its counterpart in $G_t(i)$, and there are precisely two ways of doing this.

We have thus shown that 
\[
\frac \epsilon 2 \cdot (2^i)! \leq n(v,i) \leq \Delta^{5 \cdot 2^i}
\]
for arbitrary $i$.
For large enough $i$ this yields a contradiction since the left hand side asymptotically grows faster than the right hand side.
\end{proof}

\bibliographystyle{abbrv}
\bibliography{bibliography.bib}

\begin{thebibliography}{10}

\bibitem{zbMATH05143571}
G.~Cherlin and S.~Shelah.
\newblock Universal graphs with a forbidden subtree.
\newblock {\em J. Comb. Theory, Ser. B}, 97(3):293--333, 2007.

\bibitem{zbMATH06806086}
G.~Cherlin and S.~Shelah.
\newblock Universal graphs with a forbidden subgraph: block path solidity.
\newblock {\em Combinatorica}, 36(3):249--264, 2016.

\bibitem{zbMATH00870145}
G.~Cherlin and N.~Shi.
\newblock Graphs omitting a finite set of cycles.
\newblock {\em J. Graph Theory}, 21(3):351--355, 1996.

\bibitem{zbMATH05936872}
R.~Diestel.
\newblock Locally finite graphs with ends: {A} topological approach. {I}:
  {Basic} theory.
\newblock {\em Discrete Math.}, 311(15):1423--1447, 2011.

\bibitem{zbMATH01011260}
Z.~F{\"u}redi and P.~Komj{\'a}th.
\newblock On the existence of countable universal graphs.
\newblock {\em J. Graph Theory}, 25(1):53--58, 1997.

\bibitem{huynh2021universality}
T.~Huynh, B.~Mohar, R.~Šámal, C.~Thomassen, and D.~R. Wood.
\newblock Universality in minor-closed graph classes.
\newblock 2021.
\newblock Preprint, arXiv:2109.00327.

\bibitem{zbMATH01321812}
P.~Komj{\'a}th.
\newblock Some remarks on universal graphs.
\newblock {\em Discrete Math.}, 199(1-3):259--265, 1999.

\bibitem{zbMATH04099373}
P.~Komj{\'a}th, A.~H. Mekler, and J.~Pach.
\newblock Some universal graphs.
\newblock {\em Isr. J. Math.}, 64(2):158--168, 1988.

\bibitem{zbMATH03877235}
P.~Komj{\'a}th and J.~Pach.
\newblock Universal graphs without large bipartite subgraphs.
\newblock {\em Mathematika}, 31:282--290, 1984.

\bibitem{zbMATH03225943}
R.~Rado.
\newblock Universal graphs and universal functions.
\newblock {\em Acta Arith.}, 9:331--340, 1964.

\end{thebibliography}

\end{document}